\documentclass[11pt]{amsart}

\usepackage{amsfonts}
\usepackage{times}
\usepackage{graphicx}
\textwidth=13cm

\setcounter{MaxMatrixCols}{10}

\newtheorem{theorem}{\textbf{{\normalfont THEOREM}}}

\theoremstyle{plain}

\newtheorem{example}{\textbf{{\normalfont EXAMPLE}}}

\newtheorem{proposition}{\textbf{{\normalfont PROPOSITION}}}

\numberwithin{equation}{section}

\begin{document}
\title[Focal Surfaces of A Tubular Surface in $\mathbb{E}^{3}$ ]{\textbf{{\normalfont Some
	Characterizations of Focal Surfaces of A Tubular Surface in $\mathbb{E}^{3}$}}
}
\author{SEZG\.{I}N B\"{U}Y\"{U}KK\"{U}T\"{U}K, \.{I}L\.{I}M K\.{I}\c{S}\.{I}, G\"{U}NAY \"{O}%
ZT\"{U}RK}
\address[S. B\"{u}y\"{u}kk\"{u}t\"{u}k]{
Kocaeli University, G\"{o}lc\"{u}k Vocational School of Higher Education, Kocaeli, TURKEY}
\email{sezginbuyukkutuk@gmail.com}
\address[\.{I}. Ki\c{s}i]{
Kocaeli University, Art and Science Faculty, Department of Mathematics,
Kocaeli, TURKEY}
\email{ilim.ayvaz@kocaeli.edu.tr}
\address[G. \"{O}zt\"{u}rk]{
\.{I}zmir Demokrasi University, Art and Science Faculty, Department of Mathematics,
\.{I}zmir, TURKEY}
\email{gunay.ozturk@idu.edu.tr}
\subjclass[2010]{ 53A05, 53A10}
\keywords{Focal surface, tubular surface, Darboux frame.}

{\scriptsize
\begin{abstract}
Here, we focus on focal surfaces of a tubular surface in Euclidean $%
3-$space $\mathbb{E}^{3}.$ Firstly, we give the tubular surfaces with
respect to Frenet and Darboux frames. Then, we define focal surfaces
of these tubular surfaces. We get some results for these types of surfaces
to become flat and we show that there is no minimal focal surface of a
tubular surface in $\mathbb{E}^{3}$. We give some examples for these type surfaces. Further, we show that $u$-parameter
curves cannot be asymptotic curves and we obtain some results about $v$%
-parameter curves of the focal surface $M^{\ast }$.
\end{abstract}}

\maketitle

\section{\protect\smallskip \textbf{INTRODUCTION}}

Focal surfaces are known as line congruences. The concept of line
congruences is defined for the first time in visualization in 1991 by Hagen
and Pottman \cite{HPD}.

Let $M:X(u,v)$ be a surface defined as a real-valued function and $N\left(
u,v\right) $ be a unit normal vector on the surface. The line congruence is
defined as%
\begin{equation}
C\left( u,v,z\right) =X\left( u,v\right) +zE\left( u,v\right) ,  \label{1.1}
\end{equation}%
where $E\left( u,v\right) $ is the set of unit vectors. For each $\left(
u,v\right) $, the equation (\ref{1.1}) indicates a line congruence and
called generatrix. Here, the parameter $z$ is a marked distance. In
addition, there exist two special points (real, imaginary or unit) on the
generatrix of $C.$ These points are called as focal points which are the
osculator points with generatrix. Therefore, focal surfaces are defined as a
geometric locus of focal points. In general, there exist two focal surfaces.
If $E\left( u,v\right) =N\left( u,v\right) $, then $C=C_{N}$ is normal
congruence. Thus, the focal surface $C_{N}$ has the following parametric
representation%
\begin{equation*}
X^{\ast }_{i}\left( u,v\right) =X\left( u,v\right) +\kappa _{i}^{-1}\left(
u,v\right) N\left( u,v\right) ,
\end{equation*}%
where $\kappa _{1}$ and $\kappa _{2}$ are the principle curvature functions
of the surface $M:X\left( u,v\right) $ \cite{HH}. The center of curvature of
the normal section curve corresponds to a certain level of the normal vector
at a point $X\left( u_{0},v_{0}\right) $ on $M.$ The extreme values are the
center of the curvature of two principle directions. These points correspond
to the focal points. For this reason, the congruence of lines is considered
as a set of lines which are tangent to two surfaces. Also, these two
surfaces are focal surfaces of congruence of the lines. Thus, focal points
of the normal congruence are the centers of curvature of two principle
directions. Some studies can be found about focal curves and focal surfaces
in Euclidean spaces \cite{HH, O, OA2, OA}.

Canal surface with a significant place in geometry provides benefits in
showing human internal organs, long thin objects, surface modelling, CG/ CAD
and graphics. A canal surface $X\left( u,v\right) $ obtained by spin curve $%
\gamma \left( u\right) $ is the combination of the spheres which are
determined by the radius function $r(u)$ and center $\gamma \left( u\right)$. If the radius function is constant, then this surface is called tubular surface. In \cite{S}, the authors studied tubular surface in Euclidean $3-$%
space. Recently, in \cite{KO, KO2, KOA}, the authors have attended to
tubular surfaces in Euclidean $4-$space $\mathbb{E}^{4}.$ Thanks to
geometric structure of these types of surfaces, they are also used in
reshaping and planning the movement lines of the robots (see, \cite{MPSY}%
).

In differential geometry, frame fields are important tools for analyzing
curves and surfaces. Frenet frame is the most familier frame field but there
is also the other frame field such as Darboux frame.
Besides Frenet frame is constructed on a curve with its velocity and the
acceleration vectors, Darboux frame is constructed on a surface with the
curves' velocity vector and the surface' normal vector. There have been so many studies about Darboux frame such as \cite{DDKO, KA, KT}.

In the current work, we study focal surfaces of a tubular surface which are
constructed by the Frenet frame and Darboux frame. We
calculate the mean and Gaussian curvatures of the focal
surfaces and get the necessary and sufficient conditions of these surfaces
to become flat and minimal.

\section{\protect\smallskip \textbf{BASIC CONCEPTS}}

Let $\gamma =\gamma (s):I\subseteq\mathbb{R}\rightarrow \mathbb{E}^{3}$ be a unit speed
curve in the Euclidean space $%
\mathbb{E}^{3}$. Then the derivatives
of the Frenet frame $\left\{ T,N_{1},N_{2}\right\} $ of $\gamma $
(Frenet-Serret formula);%
\begin{equation*}
\left[
\begin{array}{c}
T^{^{\prime }} \\
N_{1}^{^{\prime }} \\
N_{2}^{^{\prime }}%
\end{array}%
\right] =\left[
\begin{array}{ccc}
0 & \kappa & 0 \\
-\kappa & 0 & \tau \\
0 & -\tau & 0%
\end{array}%
\right] \left[
\begin{array}{c}
T \\
N_{1} \\
N_{2}%
\end{array}%
\right] ,
\end{equation*}%
where $\tau $, $\kappa $ are the torsion and curvature of the curve $\gamma $,
respectively \cite{C}.

Let $\gamma :$ $I\rightarrow M$ be a unit speed curve on the surface $M$.
Then, $\left\{ T,Y=T\times N,N\right\}$ is called as Darboux frame which is defined along
the curve $\gamma$. Here $T$ is the tangent vector of $\gamma$ and $N$ is the
unit normal vector of $M$. Darboux frame formulas are expressed as
\begin{equation*}
\left[
\begin{array}{c}
T~^{\prime } \\
Y^{\prime } \\
N^{\prime }%
\end{array}%
\right] =\left[
\begin{array}{ccc}
0 & k_{g} & k_{n} \\
-k_{g} & 0 & \tau _{g} \\
-k_{n} & -\tau _{g} & 0%
\end{array}%
\right] \left[
\begin{array}{c}
T \\
Y \\
N%
\end{array}%
\right] ,
\end{equation*}%
where $k_{g}$ is the geodesic curvature, $k_{n}$ is the normal curvature and
$\tau _{g}$ is the geodesic torsion of the curve $\gamma $.

The relation between the Darboux frame and the Frenet frame is given as
follows:%
\begin{equation*}
\left[
\begin{array}{c}
T \\
Y \\
N%
\end{array}%
\right] =\left[
\begin{array}{ccc}
1 & 0 & 0 \\
0 & \cos \theta  & \sin \theta  \\
0 & -\sin \theta  & \cos \theta
\end{array}%
\right] \left[
\begin{array}{c}
T \\
N_{1} \\
N_{2}%
\end{array}%
\right] ,
\end{equation*}%
where $\theta $ is the angle between the vectors $Y$ and $N_{1}$. Here
Darboux curvatures are defined by $k_{g}=\kappa \cos \theta ,$ $k_{n}=\kappa
\sin \theta $ and $\tau _{g}=\tau -\theta ^{\prime }$.

Let $M$:$X(u,v)$ be a regular surface in $\mathbb{E}^{3}$. The tangent space of $%
M$ is spanned by the vectors $X_{u}$ and $%
X_{v}$ at a point $p=X(u,v)$. The coefficients of the first fundamental form of $M$ are defined as
\begin{equation}
E=\langle X_{u},X_{u}\rangle ,\text{ \ \ }F=\left\langle
X_{u},X_{v}\right\rangle ,\text{ \ \ }G=\left\langle
X_{v},X_{v}\right\rangle ,  \label{A1}
\end{equation}%
where $\left\langle ,\right\rangle $ is the Euclidean inner product. We
set $W^{2}=EG-F^{2}\neq 0$.

Then the unit normal vector field of $M$ is defined as%
\begin{equation}
N=\frac{X_{u}\times X_{v}}{\left\Vert X_{u}\times X_{v}\right\Vert }.
\label{A2}
\end{equation}%
The coeffiicients of the second fundamental form are as follow:
\begin{equation}
l=\left\langle X_{uu},N\right\rangle ,\text{ \ \ }m=\left\langle
X_{uv},N\right\rangle ,\text{ \ \ }n=\left\langle X_{vv},N\right\rangle,
\label{A3}
\end{equation}%
\cite{C}.

The shape operator matrix of a surface is defined as

\begin{equation*}
A_{N}=\left[
\begin{array}{cc}
\frac{l}{E} & \frac{1}{W}\left( m-\frac{F}{E}l\right) \\
\frac{1}{W}\left( m-\frac{F}{E}l\right) & \frac{1}{W^{2}}\left( En-2Fm+\frac{%
	F^{2}}{E}l\right)%
\end{array}%
\right] ,
\end{equation*}%
\cite{Be}.

The Gaussian and the mean curvatures of $M$ are given as

\begin{equation}
K=\frac{ln-m^{2}}{EG-F^{2}},  \label{ilim18}
\end{equation}%
and%
\begin{equation}
H=\frac{En+Gl-2Fm}{2\left( EG-F^{2}\right) },  \label{ilim19}
\end{equation}%
respectively \cite{C, IAO}.

Let $\gamma :$ $I\rightarrow M$ be a unit speed curve on the surface $M$.
Then $\gamma $ is an asymptotic curve for which the normal curvature
vanishes in the direction $\gamma ^{\prime }$. Recall that $\gamma $ is
asymptotic if and only if $\gamma ^{\prime \prime }$ is perpendicular to the normal
vector $N$ of the surface $M$. Furthermore, $\gamma $ is a geodesic curve on
$M$ if the tangential component $(\gamma ^{\prime \prime })^{T}$ of the
acceleration of $\gamma $ vanishes \cite{C}.

\section{\protect\smallskip \textbf{FOCAL SURFACE OF A TUBULAR SURFACE WITH FRENET FRAME}}

Tubular surface with Frenet frame was studied by \"{O}zt\"{u}rk et al.,\ in
\cite{OBBA}. In this section, we handle the tubular surface with Frenet
frame and give the focal surface of this surface in $\mathbb{E}^{3}$.

Let $\gamma \left( u\right) =\left( \gamma _{1}\left( u\right) ,\gamma
_{2}\left( u\right) ,0\right) \subset \mathbb{E}^{3}$ be a curve
parametrized by arclength. Then the famous Frenet formulas become%
\begin{equation*}
\begin{array}{c}
\gamma ^{\prime }=T, \\
T^{\prime }=\kappa N_{1}, \\
N_{1}^{\prime }=-\kappa T, \\
N_{2}^{\prime }=0.%
\end{array}%
\end{equation*}

The tubular surface with respect to Frenet frame has the parametrization:
\begin{equation}
M:X\left( u,v\right) =\gamma \left( u\right) +r\left( \cos vN_{1}\left(
u\right) +\sin vN_{2}\right) ,  \label{e1}
\end{equation}%
where $r=const.$ is the radius of the spheres. The tangent space of $M$ at
a point $p=X(u,v)$ is spanned by
\begin{eqnarray}
X_{u} &=&\left( 1-\kappa (u)r\cos v\right) T,  \label{e3} \\
X_{v} &=&-r\sin vN_{1}+r\cos vN_{2}.  \notag
\end{eqnarray}%
Then coefficients of the first fundamental form are
\begin{equation}
E=\left( 1-\kappa (u)r\cos v\right) ^{2},\text{ \ \ }F=0,\text{ \ \ }G=r^{2},
\label{e10}
\end{equation}%
where $W^{2}=EG-F^{2}=\left( 1-\kappa (u)r\cos v\right) ^{2}r^{2}$ \cite%
{OBBA}.

\begin{proposition}
	\cite{OBBA} $X(u,v)$ is a regular tubular surface patch if and only if \ $%
	1-\kappa (u)r\cos v\neq 0$.
\end{proposition}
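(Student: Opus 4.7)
The plan is to invoke the standard criterion for regularity of a parametrized surface patch, namely that $X(u,v)$ is regular at $(u,v)$ precisely when the tangent vectors $X_u$ and $X_v$ are linearly independent, or equivalently when $W^{2}=EG-F^{2}\neq 0$. Since the authors have already exhibited the partial derivatives in \eqref{e3} and read off the coefficients of the first fundamental form in \eqref{e10}, the proof should consist of reading these quantities back off and checking when the product $EG-F^{2}$ vanishes.

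The first step is to recall, from \eqref{e10}, that
\begin{equation*}
W^{2}=EG-F^{2}=\bigl(1-\kappa(u)r\cos v\bigr)^{2}r^{2}.
\end{equation*}
Since $r$ is a positive constant (the fixed radius of the spheres sweeping out the tubular surface), the factor $r^{2}$ is always strictly positive. Hence $W^{2}\neq 0$ if and only if the remaining factor $\bigl(1-\kappa(u)r\cos v\bigr)^{2}$ is nonzero, which in turn is equivalent to $1-\kappa(u)r\cos v\neq 0$.

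For the converse direction it suffices to run the same equivalence backwards: if $1-\kappa(u)r\cos v\neq 0$, then $W^{2}>0$, so $X_{u}$ and $X_{v}$ are linearly independent and $X(u,v)$ is a regular patch. One may also verify this directly from \eqref{e3}, noting that $X_{u}$ is a scalar multiple of $T$ while $X_{v}$ lies in the plane $\mathrm{span}\{N_{1},N_{2}\}$ and has norm $r\neq 0$, so the only way for the pair to fail to be a basis of the tangent plane is for the scalar $1-\kappa(u)r\cos v$ multiplying $T$ to vanish.

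There is essentially no obstacle here: the statement is a direct computational consequence of the formulas already established for $X_{u}$, $X_{v}$ and $W^{2}$. The only point that warrants a sentence of justification is the implicit assumption $r\neq 0$, which is built into the definition of a tubular surface in \eqref{e1}.
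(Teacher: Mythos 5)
Your argument is correct and is exactly the justification the paper relies on: the paper states this proposition as a citation from [OBBA] immediately after computing $W^{2}=EG-F^{2}=\left(1-\kappa(u)r\cos v\right)^{2}r^{2}$, so the regularity criterion $W^{2}\neq 0$ combined with $r\neq 0$ is the intended (implicit) proof. Your additional remark that $X_{u}$ is parallel to $T$ while $X_{v}$ has norm $r$ in $\mathrm{span}\{N_{1},N_{2}\}$ is a harmless elaboration of the same computation.
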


The unit normal vector field and the second partial derivatives of $M$ are
obtained as%
\begin{equation*}
N=-\cos vN_{1}-\sin vN_{2}
\end{equation*}%
and%
\begin{eqnarray}
X_{uu} &=&-\kappa ^{\prime }(u)r\cos vT+\kappa (u)\left( 1-\kappa (u)r\cos
v\right) N_{1},  \notag \\
X_{uv} &=&\kappa (u)r\sin vT,  \label{e8} \\
X_{vv} &=&-r\cos vN_{1}-r\sin vN_{2},  \notag
\end{eqnarray}%
respectively.

Then the coefficients of the second fundamental form become%
\begin{equation}
l=-\kappa (u)\left( 1-\kappa (u)r\cos v\right) \cos v,\text{ \ \ }m=0,\text{ \
	\ }n=r.  \label{e9}
\end{equation}%
Thus, from the equations (\ref{e10}) and (\ref{e9}),\ the Gaussian and mean
curvature functions of $M$ are calculated as%
\begin{equation*}
K=\frac{-\kappa (u)\cos v}{r\left( 1-\kappa (u)r\cos v\right) },
\end{equation*}%
and%
\begin{equation*}
H=\frac{1-2\kappa (u)r\cos v}{2r\left( 1-\kappa (u)r\cos v\right) },
\end{equation*}%
respectively.

The shape operator matrix of the surface $M$ is as follows:%
\begin{equation}
A_{N}=\left[
\begin{array}{cc}
\frac{-\kappa (u)\cos v}{1-\kappa (u)r\cos v} & 0 \\
0 & \frac{1}{r}%
\end{array}%
\right]  \label{z4}
\end{equation}%
\cite{OBBA}.

From now on, we can give the parametrization of the focal surface $M^{\ast }$
of $M$ by obtaining the principal curvature functions of $M$.

Using (\ref{z4}), we get the principal curvature functions as%
\begin{equation}
\kappa _{1}=\frac{1}{r},\ \kappa _{2}=\frac{-\kappa (u)\cos v}{1-\kappa
	(u)r\cos v}.  \label{z5}
\end{equation}%
From the definition of the focal surface of a given surface and using the
equation (\ref{z5}), we obtain the focal surface $M^{\ast }$ of $M$ as%
\begin{equation}
X^{\ast }\left( u,v\right) =\gamma \left( u\right) +\frac{1}{\kappa (u)\cos v%
}\left( \cos vN_{1}\left( u\right) +\sin vN_{2}\right) .  \label{z3}
\end{equation}%
The tangent space of the focal surface $M^{\ast }$ is spanned by
\begin{equation}
\left( X^{\ast }\right) _{u}=\frac{-\kappa ^{\prime }(u)}{\kappa ^{2}(u)}%
\left( N_{1}+\tan vN_{2}\right) ,  \label{e14}
\end{equation}%
and%
\begin{equation*}
\left( X^{\ast }\right) _{v}=\frac{1}{\kappa (u)\cos ^{2}v}N_{2}.
\end{equation*}%
Thus from (\ref{e14}), the coefficients of the first fundamental form are
obtained as
\begin{equation}
E^{\ast }=\frac{\left( \kappa ^{\prime }(u)\right) ^{2}}{\kappa ^{4}(u)\cos
	^{2}v},\text{ \ \ }F^{\ast }=\frac{-\kappa ^{\prime }(u)\sin v}{\kappa
	^{3}(u)\cos ^{3}v},\text{ \ \ }G^{\ast }=\frac{1}{\kappa ^{2}(u)\cos ^{4}v},
\label{16}
\end{equation}%
where $\left( W^{\ast }\right) ^{2}=\frac{\left( \kappa ^{\prime }(u)\right)
	^{2}}{\kappa ^{6}(u)\cos ^{4}v}\neq 0$. Further, from (\ref{A2}) we find the
normal vector of $M^{\ast }$
\begin{equation}
N^{\ast }=-T.  \label{e17}
\end{equation}
The second partial derivatives of $X^{\ast }\left( u,v\right) $ are as in
the following:%
\begin{eqnarray}
\left( X^{\ast }\right) _{uu} &=&\frac{\kappa ^{\prime }(u)}{\kappa (u)}T+%
\frac{-\kappa ^{\prime \prime }(u)\kappa (u)+2\left( \kappa ^{\prime
	}(u)\right) ^{2}}{\kappa ^{3}(u)}N_{1}  \notag \\
&&\text{ \ \ \ \ \ \ \ \ \ \ \ }+\frac{-\kappa ^{\prime \prime }(u)\kappa
	(u)+2\left( \kappa ^{\prime }(u)\right) ^{2}}{\kappa ^{3}(u)}\tan vN_{2}
\notag \\
\left( X^{\ast }\right) _{uv} &=&-\frac{\kappa ^{\prime }(u)}{\kappa ^{2}(u)}%
\left( 1+\tan ^{2}v\right) N_{2},  \label{e18} \\
\left( X^{\ast }\right) _{vv} &=&\frac{2\sin v}{\kappa (u)\cos ^{3}v}N_{2}.
\notag
\end{eqnarray}%
Hence, from (\ref{e17}) and (\ref{e18}), we get the coefficients of the
second fundamental form of the focal surface $M^{\ast }$ as%
\begin{equation}
l^{\ast }=-\frac{\kappa ^{\prime }(u)}{\kappa (u)},\text{ \ \ }m^{\ast }=0,%
\text{ \ \ }n^{\ast }=0.  \label{e19}
\end{equation}

Thus, we can give the following results:

\begin{theorem}
	Let $M$ be a tubular surface with Frenet frame given with the
	parametrization (\ref{e1}) and $M^{\ast }$ be the focal surface of $M$ with
	the parametrization (\ref{z3}) in $\mathbb{E}^{3}$. Then the Gaussian
	curvature of $M^{\ast }$ vanishes, so the focal surface is flat.
\end{theorem}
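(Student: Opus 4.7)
The plan is to invoke the Gaussian curvature formula \eqref{ilim18} directly, after observing that the quantities feeding into it for $M^{\ast}$ have already been computed in the text leading up to the theorem. Specifically, I would use equation \eqref{e19}, which records
\[
l^{\ast}=-\frac{\kappa^{\prime}(u)}{\kappa(u)},\qquad m^{\ast}=0,\qquad n^{\ast}=0,
\]
together with \eqref{16}, which gives $E^{\ast}G^{\ast}-(F^{\ast})^{2}=(W^{\ast})^{2}=\dfrac{(\kappa^{\prime}(u))^{2}}{\kappa^{6}(u)\cos^{4}v}\neq 0$, so that the denominator in \eqref{ilim18} is nonzero and the formula is legitimately applied.

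The main computation is then trivial: since $n^{\ast}=0$ and $m^{\ast}=0$, the numerator $l^{\ast}n^{\ast}-(m^{\ast})^{2}$ vanishes identically, and substituting into \eqref{ilim18} gives $K^{\ast}=0$. I would state this as a one-line consequence, then recall that a surface with identically vanishing Gaussian curvature is by definition flat, which yields the conclusion of the theorem.

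There is essentially no obstacle here, because all the heavy lifting, namely the derivation of $(X^{\ast})_{u}, (X^{\ast})_{v}$, the normal vector $N^{\ast}=-T$ in \eqref{e17}, and the second derivatives $(X^{\ast})_{uu},(X^{\ast})_{uv},(X^{\ast})_{vv}$ in \eqref{e18}, has already been carried out before the statement. The only minor point worth flagging is the implicit nondegeneracy assumption $\kappa^{\prime}(u)\neq 0$ and $\cos v\neq 0$, which are needed so that $M^{\ast}$ is a genuine regular surface and so that $l^{\ast}$ is well defined; under these standing assumptions the formula $K^{\ast}=0$ holds on the entire parameter domain, and the focal surface $M^{\ast}$ is flat.
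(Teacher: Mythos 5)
Your proposal is correct and follows exactly the paper's own argument: substitute the already-computed second fundamental form coefficients \eqref{e19} (with $m^{\ast}=n^{\ast}=0$) and the first fundamental form data \eqref{16} into the Gaussian curvature formula \eqref{ilim18} to obtain $K^{\ast}=0$. The only difference is that you spell out the vanishing of the numerator and flag the nondegeneracy conditions explicitly, which the paper leaves implicit.
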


\begin{proof}
	Let $M^{\ast }$ be the focal surface of $M$ with the parametrization (\ref%
	{z3}) in $\mathbb{E}^{3}$. Using the equations (\ref{ilim18}), (\ref{16})
	and (\ref{e19}), we get $K^{\ast }=0$, which completes the proof.
\end{proof}

\begin{theorem}
	Let $M$ be a tubular surface with Frenet frame given with the
	parametrization (\ref{e1}) and $M^{\ast }$ be the focal surface of $M$ with
	the parametrization (\ref{z3}) in $\mathbb{E}^{3}$. Then the mean curvature
	of $M^{\ast }$ is
	\begin{equation}
	H^{\ast }=-\frac{\kappa ^{3}(u)}{2\kappa ^{\prime }(u)}.  \label{e20}
	\end{equation}
\end{theorem}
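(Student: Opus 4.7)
The plan is to apply the mean curvature formula (\ref{ilim19}) directly to $M^{\ast}$, reusing the coefficients of the first and second fundamental forms already computed in (\ref{16}) and (\ref{e19}), together with the value of $(W^{\ast})^{2}$ recorded just after (\ref{16}). So the whole argument is a substitution-and-simplify, entirely parallel to the preceding Gaussian-curvature theorem.

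The first observation that makes the computation clean is that, by (\ref{e19}), both $m^{\ast}=0$ and $n^{\ast}=0$. Consequently the numerator $E^{\ast}n^{\ast}+G^{\ast}l^{\ast}-2F^{\ast}m^{\ast}$ of $H^{\ast}$ collapses to the single term $G^{\ast}l^{\ast}$, so neither $E^{\ast}$ nor $F^{\ast}$ needs to appear in the numerator. This already explains, in advance, why the final expression (\ref{e20}) depends only on $\kappa$ and $\kappa'$ and not on $v$: the $\cos v$-dependence will have to cancel between $G^{\ast}$ and $(W^{\ast})^{2}$.

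Substituting $G^{\ast}=\frac{1}{\kappa^{2}(u)\cos^{4}v}$ from (\ref{16}) and $l^{\ast}=-\frac{\kappa'(u)}{\kappa(u)}$ from (\ref{e19}) gives a single fraction with $\kappa^{3}(u)\cos^{4}v$ in the denominator. Dividing by $2(W^{\ast})^{2}=\dfrac{2(\kappa'(u))^{2}}{\kappa^{6}(u)\cos^{4}v}$, the $\cos^{4}v$ factors cancel and the remaining powers of $\kappa$ and $\kappa'$ collapse directly to $-\dfrac{\kappa^{3}(u)}{2\kappa'(u)}$, which is (\ref{e20}).

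There is no real obstacle here; the only step where a mistake is likely is in recomputing $(W^{\ast})^{2}$ from scratch, which is why I would simply quote the value given after (\ref{16}) instead of redoing the $E^{\ast}G^{\ast}-(F^{\ast})^{2}$ algebra. It is worth flagging one implicit hypothesis: formula (\ref{e20}) only makes sense where $\kappa'(u)\neq 0$, but this coincides precisely with the regularity of the parametrization $X^{\ast}$ seen in (\ref{e14}), so no extra assumption is really needed beyond what is already in force.
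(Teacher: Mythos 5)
Your proposal is correct and follows exactly the paper's own route: the paper's proof is a one-line substitution of (\ref{16}) and (\ref{e19}) into (\ref{ilim19}), and your computation (the numerator collapsing to $G^{\ast}l^{\ast}$ since $m^{\ast}=n^{\ast}=0$, then cancelling the $\cos^{4}v$ factors against $(W^{\ast})^{2}$) is simply that same substitution carried out explicitly, arriving at $H^{\ast}=-\kappa^{3}(u)/(2\kappa'(u))$.
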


\begin{proof}
	Let $M^{\ast }$ be the focal surface of $M$ with the parametrization (\ref%
	{z3}) in $\mathbb{E}^{3}$. Using the equations (\ref{ilim19}), (\ref{16})
	and (\ref{e19}), we get the result.
\end{proof}

\begin{theorem}
	There is no minimal focal surface $M^{\ast }$ of the tubular surface$\ M$.
\end{theorem}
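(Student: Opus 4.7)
The plan is to reduce the question of minimality to inspecting when the mean curvature $H^{\ast}$ can vanish, using the closed-form expression already derived in Theorem 3.2. Recall that $H^{\ast}=-\dfrac{\kappa^{3}(u)}{2\kappa^{\prime}(u)}$, so the minimality condition $H^{\ast}\equiv 0$ translates directly into the pointwise condition $\kappa^{3}(u)=0$, i.e.\ $\kappa(u)=0$.

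Next I would argue that $\kappa(u)=0$ is incompatible with the very existence of the focal surface $M^{\ast}$. Indeed, the parametrization
\[
X^{\ast}(u,v)=\gamma(u)+\frac{1}{\kappa(u)\cos v}\bigl(\cos v\,N_{1}(u)+\sin v\,N_{2}\bigr)
\]
from (\ref{z3}) is only defined where $\kappa(u)\neq 0$; moreover the same hypothesis is implicit in (\ref{16}) and (\ref{e19}) where $\kappa(u)$ appears in denominators and in the regularity condition $(W^{\ast})^{2}\neq 0$, which additionally forces $\kappa^{\prime}(u)\neq 0$. Hence on the open set where $M^{\ast}$ is a well-defined regular surface, both $\kappa(u)$ and $\kappa^{\prime}(u)$ are nonzero, so $H^{\ast}$ is a nonzero real-valued function.

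Therefore the minimality condition $H^{\ast}\equiv 0$ cannot be met anywhere on $M^{\ast}$, proving that the focal surface of a tubular surface with Frenet frame in $\mathbb{E}^{3}$ is never minimal. The only potential subtlety, which I would flag briefly, is ensuring that the appeal to Theorem 3.2 is legitimate on the entire regular locus of $M^{\ast}$; this is automatic since the formula for $H^{\ast}$ was derived under exactly the same non-degeneracy hypotheses $\kappa(u)\neq 0$ and $\kappa^{\prime}(u)\neq 0$ that are needed for $M^{\ast}$ to be regular. No serious obstacle arises — the result is essentially an immediate corollary of Theorem 3.2 combined with the regularity requirements built into the parametrization (\ref{z3}).
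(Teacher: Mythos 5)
Your proposal is correct and follows essentially the same route as the paper: both deduce from the formula $H^{\ast}=-\kappa^{3}(u)/\bigl(2\kappa^{\prime}(u)\bigr)$ of Theorem 3.2 that minimality would force $\kappa\equiv 0$, which is incompatible with the focal surface being defined and regular. Your version merely spells out more explicitly why $\kappa=0$ is a contradiction (it appears in the denominator of the parametrization and of the regularity condition), which the paper leaves implicit.
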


\begin{proof}
	Assume that the focal surface $M^{\ast }$ of the surface $M$ is minimal.
	From the equation (\ref{e20}), the curvature function $\kappa $ according to
	Frenet frame vanishes identically, this is a contradiction. Thus, there is
	no minimal focal surface of the tubular surface $M$.
\end{proof}

\begin{theorem}
	Let $M$ be a tubular surface with Frenet frame given with the
	parametrization (\ref{e1}) and $M^{\ast }$ be the focal surface of $M$ with
	the parametrization (\ref{z3}) in $\mathbb{E}^{3}$. Then the focal surface $%
	M^{\ast }$ has constant mean curvature if and only if the curvature function
	$\kappa (u)$ satisfies
	\begin{equation}
	\kappa (u)=\pm \frac{\sqrt{\left( u+c_{1}c\right) c}}{u+c_{1}c},  \label{e21}
	\end{equation}
	where $c,c_{1}$ are real constants.
\end{theorem}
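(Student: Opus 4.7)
My plan is to set up the condition "$H^{\ast}$ is constant'' as an ordinary differential equation in $\kappa(u)$ using the closed-form expression for $H^{\ast}$ already obtained, and then integrate by separation of variables. Since the previous theorem gives $H^{\ast}=-\dfrac{\kappa^{3}(u)}{2\kappa ^{\prime }(u)}$, the constancy of $H^{\ast}$ is equivalent to an autonomous first-order ODE in $\kappa$, which should integrate in closed form and match the stated expression.

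More precisely, the ($\Leftarrow$) direction is a direct substitution: starting from the proposed $\kappa(u)$, I would compute $\kappa'(u)$ and plug both into \eqref{e20} to verify that $H^{\ast}$ is in fact constant (with value determined by $c$). For the ($\Rightarrow$) direction, I would suppose $H^{\ast}\equiv h$ is constant. The case $h=0$ forces $\kappa^{3}\equiv 0$, contradicting regularity of $\gamma$ (the curve has nonvanishing curvature for its Frenet frame to be defined), so $h\neq 0$. Rewriting \eqref{e20} as
\begin{equation*}
\frac{\kappa'(u)}{\kappa^{3}(u)}=-\frac{1}{2h},
\end{equation*}
I would separate variables and integrate, obtaining
\begin{equation*}
-\frac{1}{2\kappa^{2}(u)}=-\frac{u}{2h}+\text{const},
\end{equation*}
which can be rearranged to $\dfrac{1}{\kappa^{2}(u)}=\dfrac{u+c_{1}c}{c}$ after relabelling the constant of integration as $c_{1}$ and writing $c=h$. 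Solving for $\kappa(u)$ and rationalizing the square root yields exactly
\begin{equation*}
\kappa(u)=\pm\frac{\sqrt{(u+c_{1}c)c}}{u+c_{1}c},
\end{equation*}
as claimed in \eqref{e21}.

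There is really no conceptual obstacle here: the whole argument is a separable ODE plus an algebraic tidy-up. The only small care needed is (i) dealing with the sign ambiguity when extracting $\kappa$ from $\kappa^{2}$ (which is why the final formula carries the $\pm$), and (ii) justifying that $\kappa'(u)\neq 0$ on the relevant interval so that \eqref{e20} makes sense; this is automatic whenever $H^{\ast}$ is a nonzero constant, since $\kappa\equiv 0$ is excluded. No further machinery beyond the already-derived formula for $H^{\ast}$ is required.
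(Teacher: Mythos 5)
Your argument is correct and follows essentially the same route as the paper: the paper likewise sets $H^{\ast}$ equal to a constant $c$ in \eqref{e20}, obtains the separable equation $\kappa^{3}(u)+2\kappa'(u)c=0$, and states that its non-trivial solution is \eqref{e21}. Your write-up merely makes explicit the integration, the relabelling of constants, and the exclusion of the degenerate case, all of which the paper leaves implicit.
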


\begin{proof}
	Let $M^{\ast }$ be the focal surface of $M$ with the parametrization (\ref%
	{z3}) in $\mathbb{E}^{3}$. From the equation (\ref{e20}), we get the
	differential equation%
	\begin{equation*}
	\kappa ^{3}(u)+2\kappa ^{\prime }(u)c=0,
	\end{equation*}
	which has a non-trivial solution (\ref{e21}).
\end{proof}

\begin{example}
	Let us consider the unit speed planar curve with the parameterization
	\begin{equation*}
	\gamma (u)=\left( \left( \frac{u}{\sqrt{2}}+1\right) \cos \left( \ln \left(
	\frac{u}{\sqrt{2}}+1\right) \right) ,\left( \frac{u}{\sqrt{2}}+1\right) \sin
	\left( \ln \left( \frac{u}{\sqrt{2}}+1\right) \right) ,0\right) .
	\end{equation*}%
	The Frenet apparatus of this curve are determined by
	\begin{equation*}
	\begin{array}{c}
	T(u)=\gamma ^{\prime }(u)=\frac{1}{\sqrt{2}}\left( \cos \left( \ln \left(
	\frac{u}{\sqrt{2}}+1\right) \right) -\sin \left( \ln \left( \frac{u}{\sqrt{2}%
	}+1\right) \right) ,\right. \\
	\text{ \ \ \ \ \ \ \ \ \ \ \ \ \ \ \ \ \ \ \ \ \ \ \ \ \ } \text{\ \ }\left.
	\sin \left( \ln \left( \frac{u}{\sqrt{2}}+1\right) \right) +\cos \left( \ln
	\left( \frac{u}{\sqrt{2}}+1\right) \right) ,0\right) ,%
	\end{array}%
	\end{equation*}%
	\begin{equation*}
	\begin{array}{c}
	N_{1}(u)=\frac{1}{\sqrt{2}}\left( \sin \left( \ln \left( \frac{u}{\sqrt{2}}%
	+1\right) \right) -\cos \left( \ln \left( \frac{u}{\sqrt{2}}+1\right)
	\right) ,\right. \\
	\text{ \ \ \ }\ \text{\ \ \ \ \ \ \ \ \ \ \ \ \ }\ \text{\ }\left. \cos
	\left( \ln \left( \frac{u}{\sqrt{2}}+1\right) \right) -\sin \left( \ln
	\left( \frac{u}{\sqrt{2}}+1\right) \right) ,0\right) ,%
	\end{array}%
	\end{equation*}%
	\begin{equation*}
	N_{2}(u)=\left( 0,0,1\right) ,
	\end{equation*}%
	\begin{equation*}
	\kappa (u)=\frac{1}{u+\sqrt{2}},\text{ \ \ }\tau (u)=0.
	\end{equation*}%
Hence, the parameterization of tubular surface around the curve $\gamma (u)$ can be written with the Frenet frame as
{\scriptsize \begin{equation*}
		\begin{array}{c}
		X\left( u,v\right) =\left( \left( \frac{u}{\sqrt{2}}+1\right) \cos \left(
		\ln \left( \frac{u}{\sqrt{2}}+1\right) \right) -\frac{r}{\sqrt{2}}\cos
		v\left( \sin \left( \ln \left( \frac{u}{\sqrt{2}}+1\right) \right) +\cos
		\left( \ln \left( \frac{u}{\sqrt{2}}+1\right) \right) \right) ,\right. \\
		\text{ \ \ \ \ \ \ \ \ \ \ \ \ \ \ \ }\left( \frac{u}{\sqrt{2}}+1\right)
		\sin \left( \ln \left( \frac{u}{\sqrt{2}}+1\right) \right) +\frac{r}{\sqrt{2}%
		}\cos v\left( \cos \left( \ln \left( \frac{u}{\sqrt{2}}+1\right) \right)
		-\sin \left( \ln \left( \frac{u}{\sqrt{2}}+1\right) \right) \right) , \\
		\text{ \ \ \ \ \ \ \ }\left. r\sin v\right) .%
		\end{array}%
		\end{equation*}}
Also, the parameterization of the focal surface of this surface is obtained as
\begin{equation*}
\begin{array}{c}
	X^{\ast }(u,v)=\left( \left( \frac{u^{2}+2\sqrt{2}u+1}{\sqrt{2}\left( u+%
		\sqrt{2}\right) }\right) \cos \left( \ln \left( \frac{u}{\sqrt{2}}+1\right)
	\right) -\frac{1}{\sqrt{2}\left( u+\sqrt{2}\right) }\sin \left( \ln \left(
	\frac{u}{\sqrt{2}}+1\right) \right) ,\right. \\
	\text{ \ \ \ \ \ \ \ \ \ \ \ \ \ \ \ }\left( \frac{u^{2}+2\sqrt{2}u+1}{\sqrt{%
			2}\left( u+\sqrt{2}\right) }\right) \sin \left( \ln \left( \frac{u}{\sqrt{2}}%
	+1\right) \right) +\frac{1}{\sqrt{2}\left( u+\sqrt{2}\right) }\cos \left(
	\ln \left( \frac{u}{\sqrt{2}}+1\right) \right) , \\
	\left. \frac{1}{u+\sqrt{2}}\tan v\right)%
\end{array}.
\end{equation*}%
	Further, by taking radius $r=\sqrt{2},$ we plot the tubular surface and its
	focal surface in $\mathbb{E}^{3}$:
	\begin{figure}[tbph]
		\centering
		\includegraphics[width =10cm, angle=0]{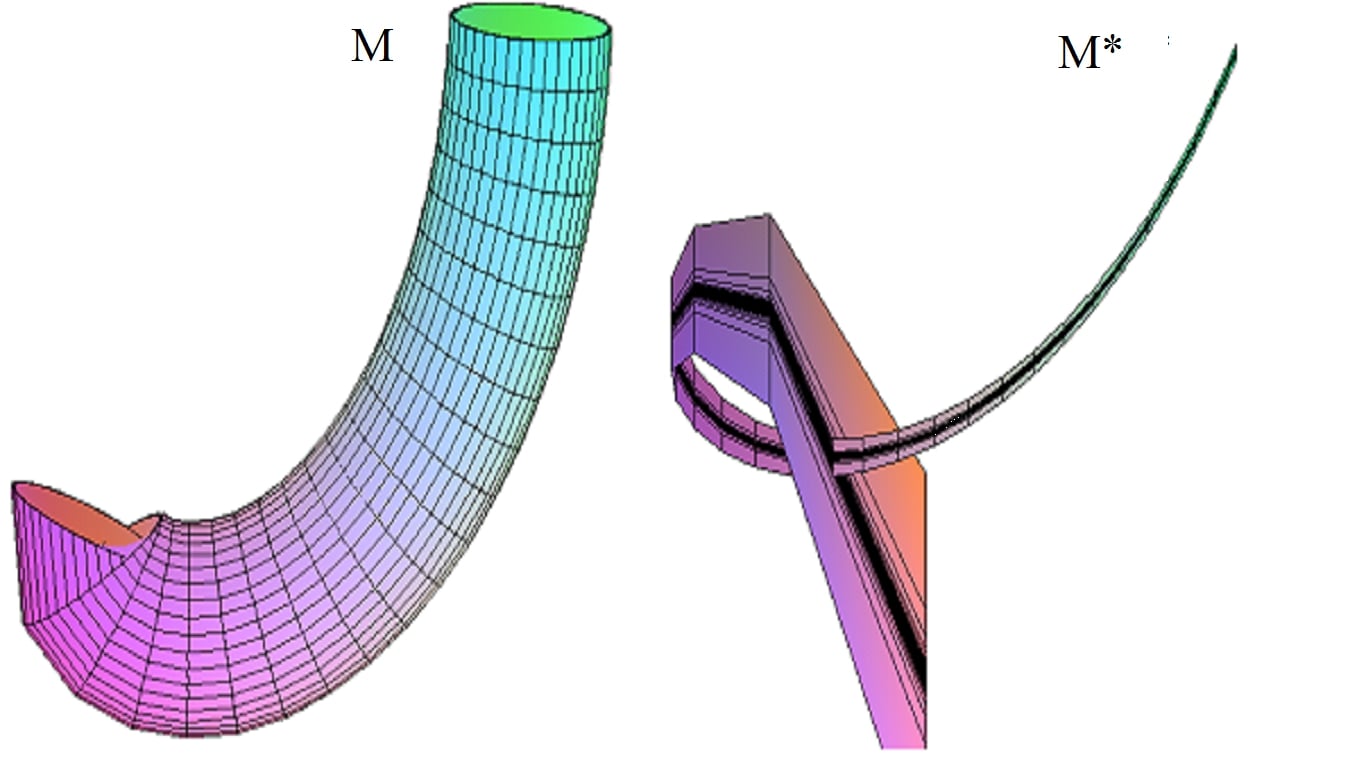}
		\caption{Tubular surface $M$ and the focal surface $M^{\ast }$}
	\end{figure}
\end{example}

Now, we obtain the following results for parameter curves on the
focal surface $M^{\ast }$.

\begin{theorem}
	Let $M$ be a tubular surface with Frenet frame given with the
	parametrization (\ref{e1}) and $M^{\ast }$ be the focal surface of $M$ with
	the parametrization (\ref{z3}) in $\mathbb{E}^{3}$. Then,
	
	i) $u$-parameter curves of the focal surface $M^{\ast }$ cannot be
	asymptotic curves.
	
	ii) $v$-parameter curves of the focal surface $M^{\ast }$ are asymptotic
	curves.
\end{theorem}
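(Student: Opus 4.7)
My plan is to reduce both assertions to the standard coordinate-curve asymptotic test: a $u$-parameter curve on $M^\ast$ is asymptotic if and only if $l^\ast=0$, and a $v$-parameter curve is asymptotic if and only if $n^\ast=0$. Both equivalences follow from the criterion recalled at the end of Section~2, namely that a curve on a surface is asymptotic precisely when its acceleration is orthogonal to the surface normal. For a $u$-curve the acceleration is $(X^\ast)_{uu}$, so the condition reduces to $\langle (X^\ast)_{uu},N^\ast\rangle = l^\ast = 0$; for a $v$-curve it reduces analogously to $\langle (X^\ast)_{vv},N^\ast\rangle = n^\ast = 0$.

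With this criterion in hand, part (ii) is immediate from (\ref{e19}), which records $n^\ast = 0$. As a sanity check I would observe, using (\ref{e18}) and (\ref{e17}), that $(X^\ast)_{vv}$ is a scalar multiple of $N_2$ while $N^\ast = -T$, so the two vectors are manifestly orthogonal.

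For part (i), the same criterion together with (\ref{e19}) gives $l^\ast = -\kappa^\prime(u)/\kappa(u)$. The only point that requires any thought, and the main obstacle, is ruling out the possibility $\kappa^\prime(u) \equiv 0$, since that is exactly the case in which $l^\ast$ would vanish. I would handle this by invoking the regularity of $M^\ast$: by (\ref{e14}) the vector $(X^\ast)_u$ is a $\kappa^\prime/\kappa^2$ multiple of $N_1+\tan v\, N_2$, and $(W^\ast)^2 = (\kappa^\prime)^2/(\kappa^6\cos^4 v)$ must be nonzero for $M^\ast$ to qualify as a regular surface (see the line just after (\ref{16})). Hence $\kappa^\prime(u) \neq 0$ wherever $M^\ast$ is defined, so $l^\ast \neq 0$ and no $u$-parameter curve of $M^\ast$ can be asymptotic.
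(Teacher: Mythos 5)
Your proposal is correct and follows essentially the same route as the paper: both parts reduce to evaluating $\left\langle \left( X^{\ast }\right) _{uu},N^{\ast }\right\rangle$ and $\left\langle \left( X^{\ast }\right) _{vv},N^{\ast }\right\rangle$ via (\ref{e17})--(\ref{e19}), with the non-vanishing of $\kappa ^{\prime }$ in part (i) secured by the regularity of $M^{\ast }$ exactly as in the paper. Your explicit appeal to $\left( W^{\ast }\right) ^{2}\neq 0$ is just a slightly more detailed phrasing of the paper's remark that $\kappa$ constant contradicts regularity.
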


\begin{proof}
	Let $M^{\ast }$ be the focal surface of $M$ with the parametrization (\ref%
	{z3}) in $\mathbb{E}^{3}$.
	
	i) From the definition of an asymptotic curve, using (\ref{e17}) and (\ref%
	{e18}), we get $\left\langle \left( X^{\ast }\right) _{uu},N^{\ast
	}\right\rangle =\frac{\kappa ^{\prime }(u)}{\kappa (u)}=0$ if and only if $%
	\kappa $ is constant which contradicts the regularity of $M^{\ast }$. Thus, $%
	u$-parameter curves of the focal surface $M^{\ast }$ cannot be asymptotic
	curves.
	
	ii) Again using the same equations, we obtain $\left\langle \left( X^{\ast
	}\right) _{vv},N^{\ast }\right\rangle =0$ which means $v$-parameter curves
	of the focal surface $M^{\ast }$ are asymptotic curves.
\end{proof}

\begin{theorem}
	Let $M$ be a tubular surface with Frenet frame given with the
	parametrization (\ref{e1}) and $M^{\ast }$ be the focal surface of $M$ with
	the parametrization (\ref{z3}) in $\mathbb{E}^{3}$. Then,
	
	i) $u$-parameter curves of the focal surface $M^{\ast }$ are geodesic curves
	if and only if the equation%
	\begin{equation}
	\kappa (u)=-\frac{1}{c_{1}u+c_{2}},  \label{ozgu1}
	\end{equation}%
	held for the curvature of $\gamma $, where $c_{1},c_{2}$ are real constants.
	
	ii) $v$-parameter curves of the focal surface $M^{\ast }$ are geodesic
	curves if and only if \ $v=t\pi ,$ $t\in \mathbb{Z}$.
\end{theorem}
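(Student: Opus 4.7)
The plan is to invoke the standard characterization of geodesics: a parametric curve on $M^{\ast }$ is a geodesic if and only if its acceleration has no tangential component, equivalently, is parallel to the unit normal $N^{\ast }$. Since (\ref{e17}) gives the simple expression $N^{\ast }=-T$, the conditions in (i) and (ii) reduce directly to requiring that the relevant second derivative from (\ref{e18}) lie along $T$.

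For part (i), I would fix $v$ and consider the $u$-curve $u\mapsto X^{\ast }(u,v)$, so the acceleration is $(X^{\ast })_{uu}$. Reading off (\ref{e18}), this vector has a $T$-component (which is automatically tangent to $N^{\ast }=-T$ and so causes no obstruction to geodesy) together with $N_{1}$- and $N_{2}$-components whose coefficients are, up to the harmless factor $\tan v$ in the $N_{2}$ slot, both scalar multiples of
\begin{equation*}
-\kappa ^{\prime \prime }(u)\kappa (u)+2(\kappa ^{\prime }(u))^{2}.
\end{equation*}
Demanding that $(X^{\ast })_{uu}$ be proportional to $T$ for all admissible $v$ therefore forces this quantity to vanish identically. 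The key algebraic observation is that $\bigl(1/\kappa \bigr)^{\prime \prime }=\frac{-\kappa ^{\prime \prime }\kappa +2(\kappa ^{\prime })^{2}}{\kappa ^{3}}$, so the geodesic ODE is equivalent to $(1/\kappa )^{\prime \prime }=0$. Integrating twice yields $1/\kappa (u)=-c_{1}u-c_{2}$ with $c_{1},c_{2}\in \mathbb{R}$, which rearranges to $\kappa (u)=-1/(c_{1}u+c_{2})$, exactly (\ref{ozgu1}). The converse is immediate by reversing the chain.

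For part (ii), I would fix $u$ and consider the $v$-curve $v\mapsto X^{\ast }(u,v)$, whose acceleration is $(X^{\ast })_{vv}=\frac{2\sin v}{\kappa (u)\cos ^{3}v}N_{2}$ by (\ref{e18}). Since $N_{2}$ is orthogonal to $T=-N^{\ast }$, the only way for this vector to be a scalar multiple of $N^{\ast }$ is for the scalar coefficient itself to vanish. As $\kappa (u)\neq 0$ and $\cos v\neq 0$ by regularity, this reduces to $\sin v=0$, i.e., $v=t\pi $ with $t\in \mathbb{Z}$.

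The proof is mostly bookkeeping once (\ref{e17}) and (\ref{e18}) are in hand; the one mildly substantive step is recognizing that the ODE $-\kappa ^{\prime \prime }\kappa +2(\kappa ^{\prime })^{2}=0$ is nothing but $(1/\kappa )^{\prime \prime }=0$, which turns what looks like a nonlinear second-order equation into a trivial integration and matches (\ref{ozgu1}) up to the naming of the integration constants. No nontrivial obstacle is anticipated.
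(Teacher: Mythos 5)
Your proposal is correct and follows essentially the same route as the paper: both reduce geodesy to the acceleration being parallel to $N^{\ast }=-T$, which kills the $N_{1}$- and $N_{2}$-components of $(X^{\ast })_{uu}$ (yielding $-\kappa ^{\prime \prime }\kappa +2(\kappa ^{\prime })^{2}=0$) and of $(X^{\ast })_{vv}$ (yielding $\sin v=0$). Your observation that the ODE is just $(1/\kappa )^{\prime \prime }=0$ is a welcome detail the paper omits, since it shows (\ref{ozgu1}) is the general solution and thus justifies the ``only if'' direction.
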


\begin{proof}
	i) From (\ref{e17}) and (\ref{e18}), we get $\left( X^{\ast }\right)
	_{uu}\wedge N^{\ast }=0$ if and only if $-\kappa ^{\prime \prime }\kappa
	+2(\kappa ^{\prime })^{2}=0$, which has a solution (\ref{ozgu1}) for real
	constants $c_{1}$ and $c_{2}$.
	
	ii) Again using the same equations, we obtain $\left( X^{\ast }\right)
	_{vv}\wedge N^{\ast }=0$ if and only if $\sin v=0$ which completes the proof.
\end{proof}

\section{\protect\smallskip \textbf{FOCAL SURFACE OF A TUBULAR SURFACE WITH DARBOUX FRAME}}

Tubular surface with the Darboux frame was studied by Do\u{g}an and Yayl\i\
in \cite{DY2}. In this part, we handle the tubular surface according to
the Darboux frame and give the focal surface of this surface in $\mathbb{E}%
^{3}$.

Let $\gamma \left( u\right) =\left( \gamma _{1}\left( u\right) ,\gamma
_{2}\left( u\right) ,\gamma _{3}\left( u\right) \right) $ be a unit speed
curve on a surface $S.$ The tubular surface with respect to Darboux frame has
the following parametrization:%
\begin{equation}
M:X\left( u,v\right) =\gamma \left( u\right) +r\left( \cos vY\left( u\right)
+\sin vU\left( u\right) \right) ,  \label{d1}
\end{equation}%
where $r=const.$ is the radius of the spheres and $U$ is the unit normal of
the surface $S$ along the curve $\gamma $. Then the tangent space of $M$ is spanned by $X_{u}$ and $X_{v}$ at a point $p=X(u,v)$:%
\begin{eqnarray}
X_{u} &=&\left( 1-br\right) T-r\tau _{g}\sin vY+r\tau _{g}\cos vU,
\label{d2} \\
X_{v} &=&-r\sin vY+r\cos vU,  \notag
\end{eqnarray}%
where%
\begin{equation}
b\left( u,v\right) =k_{g}\left( u\right) \cos v+k_{n}(u)\sin v.  \label{d2a}
\end{equation}%
Then coefficients of the first fundamental form become%
\begin{equation}
E=\left( 1-br\right) ^{2}+r^{2}\tau _{g}^{2},\ \ F=r^{2}\tau _{g},\ \
G=r^{2},  \label{d3}
\end{equation}%
where $W^{2}=EG-F^{2}=\left( 1-br\right) ^{2}r^{2}$ \cite{DY2}.

\begin{proposition}
	\cite{DY2} $X(u,v)$ is a regular tubular surface patch if and only if \ $%
	b\neq \frac{1}{r}$.
\end{proposition}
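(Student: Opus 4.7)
The plan is to reduce regularity of the patch to the non-vanishing of $W^{2}=EG-F^{2}$, and then to read off the condition $b\neq 1/r$ directly from the expression for $W^{2}$ already recorded in (\ref{d3}). Since a parametrized surface is regular exactly when $X_{u}\times X_{v}\neq 0$, and since $\|X_{u}\times X_{v}\|^{2}=EG-F^{2}$, the equivalence we need is: $X(u,v)$ is regular iff $W^{2}\neq 0$.

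First I would verify (\ref{d2}) by differentiating (\ref{d1}) with the Darboux formulas $T^{\prime}=k_{g}Y+k_{n}U$, $Y^{\prime}=-k_{g}T+\tau_{g}U$, $U^{\prime}=-k_{n}T-\tau_{g}Y$. Collecting the $T$-components in $X_{u}$ produces the factor $1-r(k_{g}\cos v+k_{n}\sin v)=1-br$, while the $\tau_{g}$-terms land on $Y$ and $U$ exactly as claimed; the expression for $X_{v}$ is immediate because $\gamma$ contributes nothing to the $v$-derivative. Then, using orthonormality of $\{T,Y,U\}$, I would compute $E=(1-br)^{2}+r^{2}\tau_{g}^{2}(\sin^{2}v+\cos^{2}v)=(1-br)^{2}+r^{2}\tau_{g}^{2}$, $F=r^{2}\tau_{g}(\sin^{2}v+\cos^{2}v)=r^{2}\tau_{g}$ and $G=r^{2}$, matching (\ref{d3}).

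It then only remains to substitute into $EG-F^{2}$:
\begin{equation*}
W^{2}=r^{2}\bigl((1-br)^{2}+r^{2}\tau_{g}^{2}\bigr)-r^{4}\tau_{g}^{2}=r^{2}(1-br)^{2}.
\end{equation*}
Since $r$ is a fixed nonzero constant (the tube radius), $W^{2}\neq 0$ holds iff $1-br\neq 0$, i.e. iff $b\neq 1/r$, which is precisely the stated criterion.

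The only mildly subtle point is the cross-term bookkeeping in $\langle X_{u},X_{u}\rangle$ and $\langle X_{u},X_{v}\rangle$: a careless sign on either $r\tau_{g}\sin v$ or $r\tau_{g}\cos v$ would destroy the clean cancellation $r^{2}\cdot r^{2}\tau_{g}^{2}-(r^{2}\tau_{g})^{2}=0$ that removes the geodesic torsion from $W^{2}$. Once one is mindful of this cancellation, the proposition follows at once from the standard identification of regularity with $W^{2}\neq 0$, and there is no further obstacle.
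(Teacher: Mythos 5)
Your proposal is correct and follows exactly the route the paper intends: the paper records $W^{2}=EG-F^{2}=(1-br)^{2}r^{2}$ immediately before the proposition (citing \cite{DY2} rather than writing out a proof), so regularity reduces to $W^{2}\neq 0$, i.e. $b\neq \frac{1}{r}$ since $r$ is a nonzero constant. Your verification of (\ref{d2}) and (\ref{d3}) via the Darboux formulas and the cancellation of the $\tau_{g}$-terms in $EG-F^{2}$ is accurate, so nothing is missing.
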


The unit normal vector field and the second partial derivatives of $M$ are
obtained as
\begin{equation}
N=-\cos vY-\sin vU.  \label{d4}
\end{equation}%
and%
\begin{eqnarray}
X_{uu} &=&\left( -b_{u}r-r\tau _{g}b_{v}\right) T+\left( k_{g}\left(
1-br\right) -r\tau _{g}^{\prime }\sin v-r\tau _{g}^{2}\cos v\right) Y  \notag
\\
&&\text{ \ \ \ \ \ \ \ \ \ \ \ \ \ \ \ \ \ \ \ \ \ \ \ \ \ \ }+\left(
k_{n}\left( 1-br\right) +r\tau _{g}^{\prime }\cos v-r\tau _{g}^{2}\sin
v\right) U,  \notag \\
X_{uv} &=&-b_{v}rT-r\tau _{g}\cos vY-r\tau _{g}\sin vU,  \label{d5} \\
X_{vv} &=&-r\cos vY-r\sin vU,  \notag
\end{eqnarray}%
respectively.

Then the coefficients of the second fundamental form become%
\begin{equation}
l=-\left( 1-br\right) b+r\tau _{g}^{2},\text{ \ \ }m=r\tau _{g},\text{ \ \ }%
n=r.  \label{d6}
\end{equation}

\bigskip

Thus, from the equations (\ref{d3}) and (\ref{d6}),\ the Gaussian and mean
curvature functions of $M$ are calculated as%
\begin{equation}
K=\frac{b}{r\left( br-1\right) },  \label{d7}
\end{equation}%
and%
\begin{equation}
H=\frac{1-2br}{2\left( 1-br\right) r}  \label{d8}
\end{equation}%
respectively \cite{DY2}.

Now, we focus on the parametrization of the focal surface $M^{\ast }$ of $M$
by obtaining the principal curvature functions of $M$.

Using (\ref{d7}), (\ref{d8}), and the equation $\kappa _{i}=H\pm \sqrt{%
	H^{2}-K}$, $(i=1,2)$, we get the principal curvature functions as
\begin{equation}
\kappa _{1}=\frac{1}{r},\text{ \ }\kappa _{2}=\frac{b}{br-1}.  \label{d9}
\end{equation}%
From the definition of the focal surface of a given surface and using the
equation (\ref{d9}), we obtain the focal surface $M^{\ast }$ of $M$ as
\begin{equation}
X^{\ast }\left( u,v\right) =\gamma \left( u\right) +\frac{1}{b(u,v)}\left(
\cos vY(u)+\sin vU(u)\right) ,  \label{d10}
\end{equation}%
where $b\left( u,v\right) =k_{g}\left( u\right) \cos v+k_{n}(u)\sin v.$

The tangent space of the focal surface $M^{\ast }$ is spanned by the vectors
\begin{equation}
\left( X^{\ast }\right) _{u}=\left( -\frac{b_{u}}{b^{2}}\cos v-\frac{1}{b}%
\tau _{g}\sin v\right) Y+\left( -\frac{b_{u}}{b^{2}}\sin v+\frac{1}{b}\tau
_{g}\cos v\right) U,  \label{d11}
\end{equation}%
and%
\begin{equation}
\left( X^{\ast }\right) _{v}=\left( -\frac{b_{v}}{b^{2}}\cos v-\frac{1}{b}%
\sin v\right) Y+\left( -\frac{b_{v}}{b^{2}}\sin v+\frac{1}{b}\cos v\right) U.
\label{d12}
\end{equation}%
Thus from (\ref{d11}) and (\ref{d12}), the coefficients of the first
fundamental form are obtained as follows:%
\begin{equation}
E^{\ast }=\frac{b_{u}^{2}+b^{2}\tau _{g}^{2}}{b^{4}},\text{ \ \ }F^{\ast }=%
\frac{b_{u}b_{v}+b^{2}\tau _{g}}{b^{4}},\text{ \ \ }G^{\ast }=\frac{%
	b_{v}^{2}+b^{2}}{b^{4}},  \label{d13}
\end{equation}%
where $\left( W^{\ast }\right) ^{2}=\frac{1}{b^{6}}\left( b_{u}-b_{v}\tau
_{g}\right) ^{2}$.

\begin{proposition}
	$X^{\ast }(u,v)$ is a regular tubular surface patch if and only if \ $\left(
	k_{g}^{\prime }-k_{n}\tau _{g}\right) \cos v+\left( k_{n}^{\prime
	}+k_{g}\tau _{g}\right) \sin v\neq 0$.
\end{proposition}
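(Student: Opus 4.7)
The plan is a direct computation starting from the quantity $\left(W^{\ast}\right)^2 = \frac{1}{b^6}(b_u - b_v\tau_g)^2$ that was already derived just before the proposition. Since regularity of the patch $X^{\ast}(u,v)$ is equivalent to $\left(W^{\ast}\right)^2 \neq 0$, and $b \neq 0$ is implicit (otherwise $X^{\ast}$ in (\ref{d10}) is undefined), the question reduces immediately to showing that $b_u - b_v\tau_g$ coincides, up to the already-factored sign/trig structure, with $(k_g' - k_n\tau_g)\cos v + (k_n' + k_g\tau_g)\sin v$.

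Concretely, I would differentiate $b(u,v) = k_g(u)\cos v + k_n(u)\sin v$ in both variables to get
\begin{equation*}
b_u = k_g'(u)\cos v + k_n'(u)\sin v, \qquad b_v = -k_g(u)\sin v + k_n(u)\cos v,
\end{equation*}
and then form the combination
\begin{equation*}
b_u - b_v\tau_g = k_g'\cos v + k_n'\sin v - \tau_g\bigl(-k_g\sin v + k_n\cos v\bigr).
\end{equation*}
Grouping the $\cos v$ and $\sin v$ terms yields exactly $(k_g' - k_n\tau_g)\cos v + (k_n' + k_g\tau_g)\sin v$.

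Putting the two equivalences together gives regularity $\iff \left(W^{\ast}\right)^2 \neq 0 \iff b_u - b_v\tau_g \neq 0 \iff (k_g' - k_n\tau_g)\cos v + (k_n' + k_g\tau_g)\sin v \neq 0$, which is the claim. There is essentially no conceptual obstacle here; the only subtle point worth flagging in the write-up is the tacit assumption $b \neq 0$, i.e. that the focal parametrization (\ref{d10}) is well-defined to begin with, so that dividing out $b^{-6}$ in $\left(W^{\ast}\right)^2$ is legitimate and the vanishing of $\left(W^{\ast}\right)^2$ really is controlled solely by the factor $b_u - b_v\tau_g$.
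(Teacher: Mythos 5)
Your proposal is correct and follows exactly the paper's own (one-line) argument: substitute the partial derivatives $b_u = k_g'\cos v + k_n'\sin v$ and $b_v = -k_g\sin v + k_n\cos v$ of (\ref{d2a}) into $\left( W^{\ast }\right) ^{2}=\frac{1}{b^{6}}\left( b_{u}-b_{v}\tau _{g}\right) ^{2}$ and read off the nonvanishing condition. Your explicit remark about the tacit assumption $b\neq 0$ is a small but worthwhile addition that the paper leaves unstated.
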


\begin{proof}
	Substituting the partial derivatives of the equation (\ref{d2a}) in $W^{\ast
	}$, we get the result.
\end{proof}

One can find the normal vector of the focal surface $M^{\ast }$ as
\begin{equation}
N^{\ast }=T.  \label{d14}
\end{equation}%
The second partial derivatives of $X^{\ast }\left( u,v\right) $ are%
{\footnotesize \begin{eqnarray}
	\left( X^{\ast }\right) _{uu} &=&\left( \frac{b_{u}-b_{v}\tau _{g}}{b}%
	\right) T  \notag \\
	&&+\left( \frac{-b_{uu}b^{2}\cos v+2bb_{u}^{2}\cos v+2b_{u}b^{2}\tau
		_{g}\sin v-b^{3}\tau _{g}^{2}\cos v-b^{3}\tau _{g}^{\prime }\sin v}{b^{4}}%
	\right) Y \notag \\
	&&+\left( \frac{-b_{uu}b^{2}\sin v+2bb_{u}^{2}\sin v-2b_{u}b^{2}\tau
		_{g}\cos v-b^{3}\tau _{g}^{2}\sin v+b^{3}\tau _{g}^{\prime }\cos v}{b^{4}}%
	\right) U,  \label{m1} \\
	\left( X^{\ast }\right) _{uv} &=&\left( \frac{-b_{uv}b^{2}\cos
		v+2bb_{u}b_{v}\cos v+b_{u}b^{2}\sin v+b_{v}b^{2}\tau _{g}\sin v-b^{3}\tau
		_{g}\cos v}{b^{4}}\right) Y  \notag \\
	&&+\left( \frac{-b_{uv}b^{2}\sin v+2bb_{u}b_{v}\sin v-b_{u}b^{2}\cos
		v-b_{v}b^{2}\tau _{g}\cos v-b^{3}\tau _{g}\sin v}{b^{4}}\right) U,  \notag
	\\
	\left( X^{\ast }\right) _{vv} &=&\left( \frac{-b_{vv}b^{2}\cos
		v+2bb_{v}^{2}\cos v+2b_{v}b^{2}\sin v-b^{3}\cos v}{b^{4}}\right) Y  \notag \\
	&&+\left( \frac{-b_{vv}b^{2}\sin v+2bb_{v}^{2}\sin v-2b_{v}b^{2}\cos
		v-b^{3}\sin v}{b^{4}}\right) U.  \notag
	\end{eqnarray}}

Hence, from (\ref{d13}) and (\ref{m1}), we get the coefficients of the
second fundamental form of the focal surface $M^{\ast }$ as
\begin{equation}
l^{\ast }=\frac{b_{u}-b_{v}\tau _{g}}{b},\text{ \ \ }m^{\ast }=0,\text{ \ \ }%
n^{\ast }=0.  \label{d15}
\end{equation}%
Thus, we can give the following results:

\begin{theorem}
	Let $M$ be a tubular surface according to Darboux frame given with the
	parametrization (\ref{d1}) and $M^{\ast }$ be the focal surface of $M$ with
	the parametrization (\ref{d10}) in $\mathbb{E}^{3}$. Then the Gaussian
	curvature of $M^{\ast }$ vanishes, so the focal surface is flat.
\end{theorem}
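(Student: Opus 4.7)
My plan is to mirror the strategy used for the Frenet-frame case in Theorem 3.2, since the computational infrastructure for the Darboux-frame focal surface has already been assembled in equations (\ref{d13}) and (\ref{d15}). The key observation is that the Gaussian curvature of a surface depends only on the combination $l^{\ast} n^{\ast} - (m^{\ast})^{2}$ in the numerator, together with a nonvanishing denominator $E^{\ast} G^{\ast} - (F^{\ast})^{2} = (W^{\ast})^{2}$.

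First, I would invoke the regularity hypothesis on $M^{\ast}$: by the proposition following (\ref{d13}), the patch is regular precisely when $(k_{g}^{\prime} - k_{n}\tau_{g})\cos v + (k_{n}^{\prime} + k_{g}\tau_{g})\sin v \neq 0$, which is equivalent to $(W^{\ast})^{2} \neq 0$. This guarantees that the denominator in the formula (\ref{ilim18}) for $K^{\ast}$ is nonzero, so the Gaussian curvature is well defined. Next, I would read off from (\ref{d15}) that $m^{\ast} = 0$ and $n^{\ast} = 0$, regardless of the value of $l^{\ast}$. Substituting these into the numerator of (\ref{ilim18}) gives
\begin{equation*}
l^{\ast} n^{\ast} - (m^{\ast})^{2} = \frac{b_{u} - b_{v}\tau_{g}}{b}\cdot 0 - 0^{2} = 0.
\end{equation*}
Combining with the nonvanishing denominator yields $K^{\ast} = 0$, and hence $M^{\ast}$ is flat.

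There is essentially no obstacle here: the heavy lifting has already been done in the derivation of (\ref{d15}), where the coefficients $m^{\ast}$ and $n^{\ast}$ were computed to vanish from the second partial derivatives (\ref{m1}) together with the simple normal $N^{\ast} = T$ in (\ref{d14}). The only subtlety worth flagging is that $l^{\ast}$ itself need not vanish, so flatness does not come from $M^{\ast}$ being totally geodesic; rather, it comes from the degeneracy of the second fundamental form (both $m^{\ast}$ and $n^{\ast}$ are zero), exactly as in the Frenet-frame case. Thus the conclusion follows by direct substitution, which I would present in one line just as Theorem 3.2 was proved.
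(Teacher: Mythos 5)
Your proposal is correct and follows exactly the paper's own argument: substitute the second fundamental form coefficients from (\ref{d15}) (with $m^{\ast}=n^{\ast}=0$) and the first fundamental form coefficients from (\ref{d13}) into the Gaussian curvature formula (\ref{ilim18}) to conclude $K^{\ast}=0$. Your additional remarks on regularity guaranteeing a nonzero denominator and on the distinction from total geodesy are accurate elaborations of the same one-line computation.
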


\begin{proof}
	Let $M^{\ast }$ be the focal surface of $M$ with the parametrization (\ref%
	{d10}) in $\mathbb{E}^{3}$. Using the equations (\ref{d13}), (\ref{d15}) and
	(\ref{ilim18}), we get $K^{\ast }=0$, which completes the proof.
\end{proof}

\begin{theorem}
	Let $M$ be a tubular surface according to Darboux frame given with the
	parametrization (\ref{d1}) and $M^{\ast }$ be the focal surface of $M$ with
	the parametrization (\ref{d10}) in $\mathbb{E}^{3}$. Then the mean curvature
	of $M^{\ast }$ is
	\begin{equation}
	H^{\ast }=\frac{\left( b_{v}^{2}+b^{2}\right) b}{2\left( b_{u}-b_{v}\tau
		_{g}\right) }.  \label{d16}
	\end{equation}
\end{theorem}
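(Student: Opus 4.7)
The proof is essentially a direct substitution into the mean curvature formula (\ref{ilim19}), exploiting the fact that the second fundamental form of $M^{\ast}$ in (\ref{d15}) is very sparse. My plan is to start from
\begin{equation*}
H^{\ast}=\frac{E^{\ast}n^{\ast}+G^{\ast}l^{\ast}-2F^{\ast}m^{\ast}}{2\left(E^{\ast}G^{\ast}-(F^{\ast})^{2}\right)},
\end{equation*}
and immediately observe that since $m^{\ast}=n^{\ast}=0$ by (\ref{d15}), only the $G^{\ast}l^{\ast}$ term survives in the numerator.

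Next I would substitute $G^{\ast}=(b_{v}^{2}+b^{2})/b^{4}$ from (\ref{d13}) together with $l^{\ast}=(b_{u}-b_{v}\tau_{g})/b$ from (\ref{d15}) in the numerator. For the denominator I would not recompute $E^{\ast}G^{\ast}-(F^{\ast})^{2}$ from scratch, but instead invoke the identity $(W^{\ast})^{2}=E^{\ast}G^{\ast}-(F^{\ast})^{2}=(b_{u}-b_{v}\tau_{g})^{2}/b^{6}$ already recorded right after (\ref{d13}). This gives
\begin{equation*}
H^{\ast}=\frac{\dfrac{(b_{v}^{2}+b^{2})(b_{u}-b_{v}\tau_{g})}{b^{5}}}{2\cdot\dfrac{(b_{u}-b_{v}\tau_{g})^{2}}{b^{6}}}.
\end{equation*}

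The final step is purely algebraic: one factor of $(b_{u}-b_{v}\tau_{g})$ cancels with its square in the denominator, and the ratio $b^{6}/b^{5}$ leaves a single factor of $b$, producing exactly the claimed expression (\ref{d16}). The regularity assumption from Proposition~4.1, i.e.\ $b_{u}-b_{v}\tau_{g}\neq 0$ (which by the chain-rule computation of $b_{u}$ and $b_{v}$ is equivalent to $(k_{g}'-k_{n}\tau_{g})\cos v+(k_{n}'+k_{g}\tau_{g})\sin v\neq 0$), justifies the cancellation.

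There is really no hard step here; the entire content is bookkeeping of powers of $b$ and making sure the sign of $l^{\ast}$ is carried through correctly (note that $N^{\ast}=+T$ from (\ref{d14}) flips the sign relative to the Frenet-frame case, where $N^{\ast}=-T$). The main thing to watch is that one does not expand $E^{\ast}G^{\ast}-(F^{\ast})^{2}$ by brute force, which would obscure the nice cancellation $b_{u}^{2}b^{2}+b^{4}\tau_{g}^{2}-b_{u}^{2}b_{v}^{2}/b^{2}\cdots$; using the already-noted value of $(W^{\ast})^{2}$ keeps the calculation a one-liner.
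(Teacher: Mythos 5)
Your proposal is correct and is exactly the paper's argument: the paper's proof simply says to substitute (\ref{ilim19}), (\ref{d13}) and (\ref{d15}), which is the computation you carry out explicitly (and your use of the recorded value $\left( W^{\ast }\right) ^{2}=\left( b_{u}-b_{v}\tau _{g}\right) ^{2}/b^{6}$ for the denominator is the intended shortcut). The algebra checks out and yields (\ref{d16}).
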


\begin{proof}
	Let $M^{\ast }$ be the focal surface of $M$ with the parametrization (\ref%
	{d10}) in $\mathbb{E}^{3}$. Using the equations (\ref{ilim19}), (\ref{d13})
	and (\ref{d15}), we get the result.
\end{proof}

\begin{theorem}
	There is no minimal focal surface of the tubular surface$\ M$.
\end{theorem}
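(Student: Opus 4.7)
The plan is to argue by contradiction, using the explicit formula (\ref{d16}) for the mean curvature of $M^{\ast}$ together with the fact that $1/b$ must be well-defined for the parametrization (\ref{d10}) of $M^{\ast}$ to make sense. So I would suppose that $M^{\ast}$ is minimal, i.e.\ $H^{\ast}\equiv 0$, and trace the implication through (\ref{d16}).

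First I would note that by regularity of $M^{\ast}$, the denominator $b_{u}-b_{v}\tau _{g}$ in (\ref{d16}) is nowhere zero (this is the content of Proposition~4.2 after substituting the partials of $b$), so $H^{\ast}\equiv 0$ is equivalent to the numerator vanishing identically:
\begin{equation*}
\bigl(b_{v}^{2}+b^{2}\bigr)\,b \;\equiv\; 0.
\end{equation*}
Second, since $b$ is real-valued, $b_{v}^{2}+b^{2}\geq 0$ with equality only where $b=0$, so in every case the vanishing of the product forces $b\equiv 0$ on the domain.

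Third, I would derive the contradiction from the definition of the focal surface. The factor $1/b$ appears explicitly in the parametrization (\ref{d10}) of $M^{\ast}$, so $M^{\ast}$ can only be defined where $b(u,v)=k_{g}(u)\cos v+k_{n}(u)\sin v\neq 0$. Hence $b\equiv 0$ is incompatible with the existence of $M^{\ast}$, contradicting our assumption. Therefore no such minimal $M^{\ast}$ exists.

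I do not expect any real obstacle here: the proof is a direct inspection of (\ref{d16}) once one observes that $b_{v}^{2}+b^{2}$ is a sum of squares. The only subtle point worth flagging in the write-up is the justification that $b_{u}-b_{v}\tau_{g}\neq 0$ (so we really may clear the denominator in $H^{\ast}=0$), which is exactly the regularity condition from Proposition~4.2; beyond that, the argument is a one-line algebraic observation plus the definitional constraint $b\neq 0$.
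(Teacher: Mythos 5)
Your proposal is correct and follows essentially the same route as the paper: the paper's proof likewise assumes minimality, reads off from (\ref{d16}) that $H^{\ast}=0$ forces $b=0$, and concludes this contradicts the definition of $M^{\ast}$. You simply spell out the two steps the paper leaves implicit — that the numerator $(b_{v}^{2}+b^{2})b$ vanishes only where $b=0$, and that $b\neq 0$ is required by the factor $1/b$ in (\ref{d10}) — which is a welcome clarification but not a different argument.
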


\begin{proof}
	Assume that the focal surface $M^{\ast }$ of the surface $M$ is minimal.
	From the equation (\ref{d16}), the curvature functions $H^{\ast
	}=0\Leftrightarrow b=0$, a contradiction. Thus, there is no minimal focal
	surface of the tubular surface $M$.
\end{proof}

\begin{example}
	Let us consider the unit speed curve with the parameterization
	\begin{equation*}
	\gamma (u)=\left( \cos \frac{u}{\sqrt{2}},\sin \frac{u}{\sqrt{2}},\frac{u}{%
		\sqrt{2}}\right) .
	\end{equation*}%
	The Darboux frame and curvatures $k_{g}$, $k_{n}$ of this curve are
	determined by
	\begin{equation*}
	\begin{array}{c}
	T(u)=\gamma ^{\prime }(u)=\frac{1}{\sqrt{2}}\left( -\sin \frac{u}{\sqrt{2}}%
	,\cos \frac{u}{\sqrt{2}},1\right),
	\end{array}%
	\end{equation*}%
	\begin{equation*}
	\begin{array}{c}
	Y(u)=\frac{1}{\sqrt{2}}\left( -\cos \frac{u}{\sqrt{2}}+\frac{1}{\sqrt{2}}%
	\sin \frac{u}{\sqrt{2}},-\sin \frac{u}{\sqrt{2}}-\frac{1}{\sqrt{2}}\cos
	\frac{u}{\sqrt{2}},\frac{1}{\sqrt{2}}\right) ,%
	\end{array}%
	\end{equation*}%
	\begin{equation*}
	U(u)=\frac{1}{\sqrt{2}}\left( \cos \frac{u}{\sqrt{2}}+\frac{1}{\sqrt{2}}\sin
	\frac{u}{\sqrt{2}},\sin \frac{u}{\sqrt{2}}-\frac{1}{\sqrt{2}}\cos \frac{u}{%
		\sqrt{2}},\frac{1}{\sqrt{2}}\right) ,
	\end{equation*}%
	\begin{equation*}
	k_{g}(u)=\frac{1}{2\sqrt{2}},\text{ \ \ }k_{n}(u)=\frac{1}{2\sqrt{2}}.
	\end{equation*}%
	Hence, the parameterization of tubular surface around the curve $\gamma (u)$ can be written according to Darboux frame as
	\begin{equation*}
	\begin{array}{c}
	X\left( u,v\right) =\left( \cos \frac{u}{\sqrt{2}}\left( 1-\cos v+\sin
	v\right) +\frac{1}{\sqrt{2}}\sin \frac{u}{\sqrt{2}}\left( \cos v+\sin
	v\right) ,\right. \\
	\text{ \ \ \ \ \ \ \ \ \ \ \ \ \ \ \ \ \ }\sin \frac{u}{\sqrt{2}}\left(
	1-\cos v+\sin v\right) -\frac{1}{\sqrt{2}}\cos \frac{u}{\sqrt{2}}\left( \cos
	v+\sin v\right) , \\
	\text{ \ \ \ \ \ \ \ }\left. \frac{u+\cos v+\sin v}{\sqrt{2}}\right) .%
	\end{array}%
	\end{equation*}%
	Also, the parameterization of the focal surface of this surface is obtained
	as
	{\small \begin{equation*}
		\begin{array}{c}
		X^{\ast }(u,v)=\left( \cos \frac{u}{\sqrt{2}}\left( \frac{3\sin v-\cos v}{\cos v+\sin v}\right) +\sqrt{2}\sin \frac{u}{\sqrt{2}},\right.
		\sin \frac{u}{\sqrt{2}}\left( \frac{%
			3\sin v-\cos v}{\cos v+\sin v}\right) -\sqrt{2}\cos \frac{u}{\sqrt{2}},
		\left. \frac{u+2}{\sqrt{2}}\right)%
		\end{array}
		.
		\end{equation*}}
	Further, by taking radius $r=\sqrt{2},$ we plot the tubular surface and its
	focal surface in $\mathbb{E}^{3}$:
	\begin{figure}[tbph]
		\centering
		\includegraphics[width =10cm, angle=0]{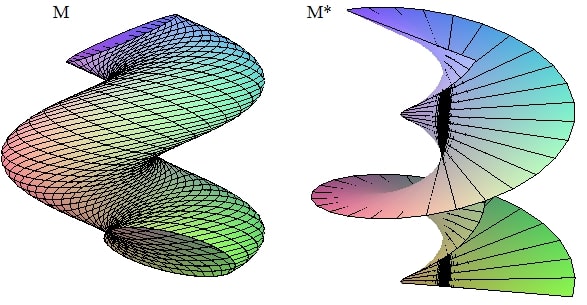}
		\caption{Tubular surface $M$ and the focal surface $M^{\ast }$}
	\end{figure}
\end{example}

\vskip 30mm

\begin{theorem}
	Let $M$ be a tubular surface with Darboux frame given with the
	parametrization (\ref{d1}) and $M^{\ast }$ be the focal surface of $M$ with
	the parametrization (\ref{d10}) in $\mathbb{E}^{3}$. Then,
	
	i) $u$-parameter curves of the focal surface $M^{\ast }$ cannot be
	asymptotic curves.
	
	ii) $v$-parameter curves of the focal surface $M^{\ast }$ are asymptotic
	curves.
\end{theorem}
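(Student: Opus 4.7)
The plan is to mirror the Frenet-case argument given earlier, reducing both items to the computed second fundamental form of $M^{\ast}$ in (\ref{d15}) together with the identification $N^{\ast}=T$ from (\ref{d14}). Recall the characterization used before: a parameter curve is asymptotic if and only if its acceleration is orthogonal to the unit normal, which for the two coordinate curves amounts respectively to $l^{\ast}=0$ and $n^{\ast}=0$.

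For part (i), I would compute the inner product $\langle (X^{\ast})_{uu},N^{\ast}\rangle$. From (\ref{m1}) only the $T$-component of $(X^{\ast})_{uu}$ contributes, and it equals $\frac{b_{u}-b_{v}\tau_{g}}{b}$, in agreement with $l^{\ast}$ in (\ref{d15}). Vanishing of this quantity is exactly the condition $b_{u}-b_{v}\tau_{g}=0$. The key step is then to invoke the regularity Proposition for $M^{\ast}$: we have $(W^{\ast})^{2}=\frac{(b_{u}-b_{v}\tau_{g})^{2}}{b^{6}}\neq 0$, which forces $b_{u}-b_{v}\tau_{g}\neq 0$. Hence no $u$-parameter curve of $M^{\ast}$ can be asymptotic.

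For part (ii), the corresponding inner product $\langle (X^{\ast})_{vv},N^{\ast}\rangle$ is zero for free: inspection of the third equation of (\ref{m1}) shows that $(X^{\ast})_{vv}$ lies in $\mathrm{span}\{Y,U\}$, so it is automatically orthogonal to $N^{\ast}=T$. Equivalently, $n^{\ast}=0$ was already recorded in (\ref{d15}). Thus every $v$-parameter curve of $M^{\ast}$ is asymptotic.

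The only mild subtlety I anticipate is bookkeeping: one should check that the expression $b_{u}-b_{v}\tau_{g}$ appearing in the $T$-component of $(X^{\ast})_{uu}$ coincides, after substituting the derivatives of (\ref{d2a}), with the quantity $(k_{g}^{\prime}-k_{n}\tau_{g})\cos v+(k_{n}^{\prime}+k_{g}\tau_{g})\sin v$ whose non-vanishing is the regularity condition in the earlier Proposition. Once this identification is made, both items follow by one-line arguments from (\ref{d14}), (\ref{d15}) and (\ref{m1}); no new calculation is required.
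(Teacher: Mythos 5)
Your proposal is correct and follows essentially the same route as the paper: both reduce part (i) to the condition $b_{u}-b_{v}\tau _{g}=0$ via $\left\langle \left( X^{\ast }\right) _{uu},N^{\ast }\right\rangle =l^{\ast }$ and rule it out by the regularity of $M^{\ast }$, and both get part (ii) for free since $\left( X^{\ast }\right) _{vv}$ has no $T$-component. Your extra remark identifying $b_{u}-b_{v}\tau _{g}$ with the expression in the regularity Proposition is a useful bit of bookkeeping that the paper leaves implicit, but it is not a different argument.
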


\begin{proof}
	Let $M^{\ast }$ be the focal surface of $M$ with the parametrization (\ref%
	{d10}) in $\mathbb{E}^{3}$.
	
	i) Using the equations (\ref{d14}), (\ref{m1}) and the definition of
	asymptotic curve, $\left\langle \left( X^{\ast }\right) _{uu},N^{\ast
	}\right\rangle =0$ if and only if $b_{u}-b_{v}\tau _{g}=0.$ But, this
	contradicts the regularity of $M^{\ast }$. Thus, $u$-parameter curves of the
	focal surface $M^{\ast }$ cannot be asymptotic curves.
	
	ii) Again using the same equations, we obtain $\left\langle \left( X^{\ast
	}\right) _{vv},N^{\ast }\right\rangle =0$ which means $v$-parameter curves
	of the focal surface $M^{\ast }$ are asymptotic curves.
\end{proof}

\begin{theorem}
	Let $M$ be a tubular surface with Darboux frame given with the
	parametrization (\ref{d1}) and $M^{\ast }$ be the focal surface of $M$ with
	the parametrization (\ref{d10}) in $\mathbb{E}^{3}$. Then,
	
	i) $u$-parameter curves of the focal surface $M^{\ast }$ are geodesic curves
	if and only if
	\begin{equation}
	-2\tau _{g}\left( b_{u}\tau _{g}^{\prime }+b\tau _{g}^{\prime \prime
	}\right) +4b\left( \tau _{g}^{\prime }\right) ^{2}-4b\tau _{g}^{4}=0
	\label{d17}
	\end{equation}%
	holds.
	
	ii) $v$-parameter curves of the focal surface $M^{\ast }$ cannot be geodesic
	curves.
\end{theorem}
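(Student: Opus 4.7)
The overall strategy is to use the paper's definition of a geodesic — the tangential component of the acceleration vanishes — applied to the $u$- and $v$-parameter curves of $M^{\ast}$. Because $N^{\ast} = T$ by (\ref{d14}), this reduces to requiring that $(X^{\ast})_{uu}$ (respectively $(X^{\ast})_{vv}$) be parallel to $T$, i.e.\ that its $Y$- and $U$-components vanish simultaneously. All the second partials are already assembled in (\ref{m1}), and the orthonormality of the Darboux frame makes the extraction of these components immediate.

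For part i), I would read the $Y$- and $U$-coefficients of $(X^{\ast})_{uu}$ from (\ref{m1}) and group each of them as a $\cos v,\sin v$ combination of the two scalars
\[
A = -b_{uu}b^{2} + 2bb_{u}^{2} - b^{3}\tau_{g}^{2}, \qquad B = 2b_{u}b^{2}\tau_{g} - b^{3}\tau_{g}^{\prime}.
\]
The simultaneous vanishing of the two components is then equivalent to $A=0$ and $B=0$. My plan is to differentiate $B=0$ in $u$ to expose $b_{uu}$, then use $A=0$ to eliminate $b_{uu}$; after clearing powers of $b$ and invoking the regularity condition $b_{u} - b_{v}\tau_{g} \neq 0$, the combined relation should collapse precisely to the compact identity (\ref{d17}). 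The converse follows by running the same manipulation backwards. The main obstacle I anticipate is exactly this algebraic reduction: $A$ and $B$ carry different weights in $b_{uu}$ and $\tau_{g}^{\prime}$, and condensing them into a single relation requires tracking the specific $v$-dependence $b = k_{g}\cos v + k_{n}\sin v$ carefully while repeatedly using $B=0$ to trade $b_{u}$ for $\tau_{g}^{\prime}$.

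For part ii), the same procedure applied to $(X^{\ast})_{vv}$ produces two component conditions, $-b_{vv}b^{2} + 2bb_{v}^{2} - b^{3} = 0$ and $2b_{v}b^{2} = 0$. Since $b \neq 0$ on $M^{\ast}$, the second forces $b_{v}\equiv 0$ along the curve; but $b_{v} = -k_{g}\sin v + k_{n}\cos v$, and this vanishing identically in $v$ would require $k_{g}(u_{0}) = k_{n}(u_{0}) = 0$. That would force $b(u_{0},v) \equiv 0$ and contradict the very definition of $M^{\ast}$ (which requires $b \neq 0$), so no $v$-parameter curve can be a geodesic. This part is essentially immediate once the component formulas from (\ref{m1}) are in hand.
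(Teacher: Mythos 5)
Your proposal is correct and follows essentially the same route as the paper: the paper imposes $(X^{\ast})_{uu}\wedge N^{\ast}=0$ and $(X^{\ast})_{vv}\wedge N^{\ast}=0$ with $N^{\ast}=T$, which yields exactly your pairs of conditions ($A=0$, $B=0$ are the paper's (\ref{d18})--(\ref{d19}), and your two conditions in part ii) are (\ref{d20})--(\ref{d21})), and the elimination you describe (differentiate $B=0$ in $u$, remove $b_{uu}$ via $A=0$, reuse $B=0$) is precisely the unstated algebra by which the paper reaches (\ref{d17}). Your part ii) contradiction via $b_{v}=-k_{g}\sin v+k_{n}\cos v$ forcing $k_{g}=k_{n}=0$ is the same conclusion ($b=0$) the paper draws; if anything, your write-up makes explicit the steps the paper compresses into ``by the use of these relations.''
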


\begin{proof}
	i) From (\ref{d14}) and (\ref{m1}), $\left( X^{\ast }\right) _{uu}\wedge
	N^{\ast }=0$ if and only if%
	\begin{eqnarray}
	b\left( -b_{{u}u}b+2\left( b_{u}\right) ^{2}-b^{2}\left( \tau _{g}\right)
	^{2}\right) &=&0,  \label{d18} \\
	b^{2}\left( 2b_{u}\tau _{g}-b\tau _{g}^{\prime }\right) &=&0.  \label{d19}
	\end{eqnarray}%
	Therefore, by the use of these relations, we obtain the desired result.
	
	ii) Similarly, with the help of (\ref{d14}) and (\ref{m1}), $\left( X^{\ast
	}\right) _{vv}\wedge N^{\ast }=0$ if and only if%
	\begin{eqnarray}
	b\left( b_{vv}b-2\left( b_{v}\right) ^{2}+b^{2}\right) &=&0,  \label{d20} \\
	b_{v} &=&0.  \label{d21}
	\end{eqnarray}
	
	Using these relations, we get $b=0$ which contradicts the representation of
	the focal surface. Hence, we get the result and this completes the proof.
\end{proof}


\begin{thebibliography}{99}
\bibitem{Be} B. Bulca, \textit{A characterization of surfaces in $\mathbb{E}^{4}$}. Ph.D thesis,  Uluda\u{g} University, Bursa, 2012.

\bibitem{C} PM. Do Carmo, \textit{Differential Geometry of Curves and Surfaces.} Englewood Cliffs, NJ, USA: Prentice-Hall, 1976.

\bibitem{DY2} F. Do\u{g}an and Y. Yayl\i, \textit{Tubes with Darboux Frame.} Int. J. Contemp. Math. Sciences. \textbf{7(16)} (2012), 751-758.

\bibitem{DDKO} M. D\"{u}ld\"{u}l, B. Uyar D\"{u}ld\"{u}l, N. Kuruo\u{g}lu and E. \"{O}zdamar, \textit{Extension of the Darboux Frame into Eucliden 4-Space
	and its Invariants.} Turkish Journal of Mathematics. \textbf{41} (2017), 1628-1639.

\bibitem{HH} H. Hagen and S. Hahmann, \textit{Generalized Focal Surfaces.} A New
Method for Surface Interrogation. Proceedings Visualization'92, Boston,
70-76, 1992.

\bibitem{HPD} H. Hagen, H. Pottmann and A. Divivier, \textit{Visualization
	Functions on a Surface.} Journal of Visualization and Animation. \textbf{2} (1991), 52-58.

\bibitem{IAO} E. \.{I}yig\"{u}n, K. Arslan and G. \"{O}zt\"{u}rk, \textit{A Characterization
	of Chen Surfaces in $E^{4}$.} J. Amer. Math. Soc. \textbf{31(2)} (2008),
209-215.

\bibitem{KA} Y. Kemer and E. Ata, \textit{Frenet-Serret \ and Darboux Frames of A
	Image Curve on a Screw Surface.}
Sinop \"{U}niversitesi Fen Bilimleri
Dergisi \textbf{2(1)} (2017), 48-58.

\bibitem{KO} \.{I}. Ki\c{s}i and G. \"{O}zt\"{u}rk, \textit{A New Approach to
	Canal Surface with Parallel Transport Frame.} International Journal of
Geometric Methods in Modern Physics. \textbf{14(2)} (2017), 1-16.

\bibitem{KO2} \.{I}. Ki\c{s}i and G. \"{O}zt\"{u}rk, \textit{A New Type of
	Tubular Surface Having Pointwise 1-Type Gauss Map in Euclidean 4-Space $\mathbb{E}^{4}$.} Journal of the Korean Mathematica Society. \textbf{55(4)} (2018), 923-938.

\bibitem{KOA} \.{I}. Ki\c{s}i, G. \"{O}zt\"{u}rk and K. Arslan, \textit{A New
	Type of Canal Surface in Euclidean Space $E^{4}$.} arXiv:1502.06947v2, 2016.

\bibitem{KT} \.{I}. Ki\c{s}i and M. Tosun, \textit{Spinor Darboux Equations of
	Curves in Euclidean 3-Space.} Mathematica Moravica. \textbf{19(1)} (2015), 87-93.

\bibitem{MPSY} T. Maekawa, M.N. Patrikalakis, T. Sakkalis and G. Yu, \textit{Analysis and applications of pipe surfaces.}, Comput. Aided Geom. Design. \textbf{15(5)} (1998), 437-458.

\bibitem{O} B. \"{O}zdemir, \textit{A Characterization of Focal Curves and
	Focal Surfaces in $\mathbb{E}^{n}$}. PhD Thesis, Uludag University, Bursa, Turkey.

\bibitem{OA2} B. \"{O}zdemir and K. Arslan, \textit{On Generalized Focal
	Surfaces in $\mathbb{E}^{3}$}, Rev. Bull. Calcutta Math. Soc. \textbf{16(1)} (2008), 23-32.

\bibitem{OA} G. \"{O}zt\"{u}rk and K. Arslan, \textit{On Focal Curves in
	Euclidean $n$\emph{-Space }$\mathbb{R}^{n}$.} Novi Sad J. Math. \textbf{48(1)} (2016), 35-44.

\bibitem{OBBA} G. \"{O}zt\"{u}rk, B. Bulca, B.K. Bayram and K. Arslan, \textit{On Canal Surfaces in $\mathbb{E}^{3}$.} Sel\c{c}uk Journal of Applied
Mathematics. \textbf{11(2)} (2010),
103-108.

\bibitem{S} A.H. Sorour, \textit{Weingarten tube-like surfaces in Euclidean $%
	3 $\emph{-space,}.} Stud. Univ. Babe\c{s}-Bolyai Math. \textbf{61(2)} (2016),
239-250.
\end{thebibliography}
\end{document}